\newtheorem{thm}{Theorem}
\newtheorem{Lemma}{Lemma}
\theoremstyle{definition}
\newtheorem{definition}{Definition}
\newtheorem{example}{Example}
\newtheorem{note}{Note}
\newcommand{\ncom}{\newcommand}
\ncom{\etal}{\emph{et al.}}
\ncom{\beqn}{\begin{eqnarray*}} \ncom{\eeqn}{\end{eqnarray*}}
\ncom{\beq}{\begin{eqnarray}} \ncom{\eeq}{\end{eqnarray}}
\ncom{\be}{\begin{equation}} \ncom{\ee}{\end{equation}}
\ncom{\ben}{\begin{equation*}} \ncom{\een}{\end{equation*}}
\ncom{\nn}{\nonumber}
\ncom{\nl}{$n$-$L$}
\ncom{\nll}{$(n-1)$-$L$}
\ncom{\ml}{$\mathcal{L^{\mathrm{n}}R}$\hspace{2pt}}
\ncom{\mr}{$\mathcal{LR^{\mathrm{n}}}$\hspace{2pt}}
\ncom{\oln}{$\mathcal{O_{L^{\mathrm{n}}R}}$\hspace{2pt}}
\ncom{\orn}{$\mathcal{O_{LR^{\mathrm{n}}}}$\hspace{2pt}}
\ncom{\mo}{$\mathcal{O_{\sigma}}$\hspace{2pt}}
\ncom{\ps}{$\mathcal{P_{\sigma}}$\hspace{2pt}}
\ncom{\cL}{$\mathcal{L}$\hspace{2pt}}
\ncom{\cR}{$\mathcal{R}$\hspace{2pt}}
\ncom{\cLs}{$\mathcal{L}s$\hspace{2pt}}
\ncom{\cRs}{$\mathcal{R}s$\hspace{2pt}}
\begin{document}

\title{Analysis of Stable Periodic Orbits in 1-D Linear Piecewise Smooth Maps}

\author{Bhooshan Rajpathak,
        Harish K. Pillai, Santanu Bandopadhyay 
\thanks{Bhooshan Rajpathak is with the Dept of Electrical Engg,
IIT Bombay, Mumbai 400076, India, e-mail: bhooshan@ee.iitb.ac.in}%
\thanks{Harish K. Pillai is with the Dept of Electrical Engg,
IIT Bombay, Mumbai 400076, India, e-mail: hp@ee.iitb.ac.in}%
\thanks{Santanu Bandopadhyay is with the Dept of Energy Sc and Engg,
IIT Bombay, Mumbai 400076, India, e-mail: santanu@me.iitb.ac.in}%
}
%==================================================================================

% make the title area
\maketitle

\begin{abstract}
%\boldmath
%

%Stable periodic orbits for a one-dimensional piecewise smooth map, are characterized analytically in this paper. 
By varying a parameter of a one-dimensional piecewise smooth map, stable periodic orbits are observed. In this paper, complete analytic characterization of these stable periodic orbits is obtained. An interesting relationship between the cardinality of orbits and their period is established. It is proved that for any $n$, there exist $\phi(n)$ distinct admissible patterns of cardinality $n$. An algorithm to obtain these distinct admissible patterns is outlined. Additionally, a novel algorithm to find the range of parameter for which the orbit exists is proposed.  
\end{abstract}

% Note that keywords are not normally used for peerreview papers.
\begin{IEEEkeywords}
Border collision bifurcation, discontinuous map, periodic orbit.
\end{IEEEkeywords}

\IEEEpeerreviewmaketitle

\section{Introduction}
\IEEEPARstart{D}{ynamics} of piecewise-smooth systems are encountered in various applications in electrical engineering and physics: controlled buck converter \cite{deane}, boost converter in discontinuous mode \cite{tse}, impact oscillators \cite{grebogi}, etc.  Significant theoretical understanding has been developed for systems with continuous maps.  Theory for piecewise-smooth maps has been partially developed in \cite{bernardo}.  Results related to the existence and stability of period-1 and period-2 fixed points in discontinuous maps have been reported in \cite{bernardo} and \cite{dutta}.  Analysis of bifurcation in piecewise-smooth systems has been shown in \cite{nusse} and \cite{yorke}. Most of the research efforts till date have been on analyzing piecewise-smooth systems through bifurcation diagrams and numerical simulation (e.g. \cite{1d}, \cite{border}, \cite{2d} and \cite{abed}). \cite{homer}, \cite{avrutina}, \cite{avrutinb} and \cite{avrutinc} have developed analytical studies to show the existence of higher periodic orbits. However, complete characterizations of stable periodic orbits for piecewise-smooth systems are yet to be developed. 
\par
In this paper we have examined the stable periodic orbits of piecewise-smooth systems analytically.  Such systems are often modeled as discrete maps which are divided into regions separated with borderlines. These maps are piecewise smooth and are differentiable everywhere except at the borderlines due to discontinuity. The one-dimensional piecewise smooth map that is investigated in this paper, is defined as \cite{border}:
\begin{equation}\label{basic}
x_{n+1}=f(x_n,a,b,\mu,l)=\left\{
                \begin{array}{lcl}
                ax_{n}+\mu & for & x_{n} \leq 0\\
                bx_{n}+\mu + l & for & x_{n} > 0
                \end{array}
        \right.
\end{equation}
\par
From the stability point of view, `$a$' and `$b$' are assumed to be in the range $(0,\,1)$. Height of the discontinuity is denoted by `$l$' and `$\mu$' is the parameter to be varied. Let us assume $l>0$ in equation \eqref{basic}. There are three cases as illustrated in Figure \ref{bifur}. 

\begin{subfigures}
\begin{figure}
\begin{center}
\scalebox{.5}{
\input{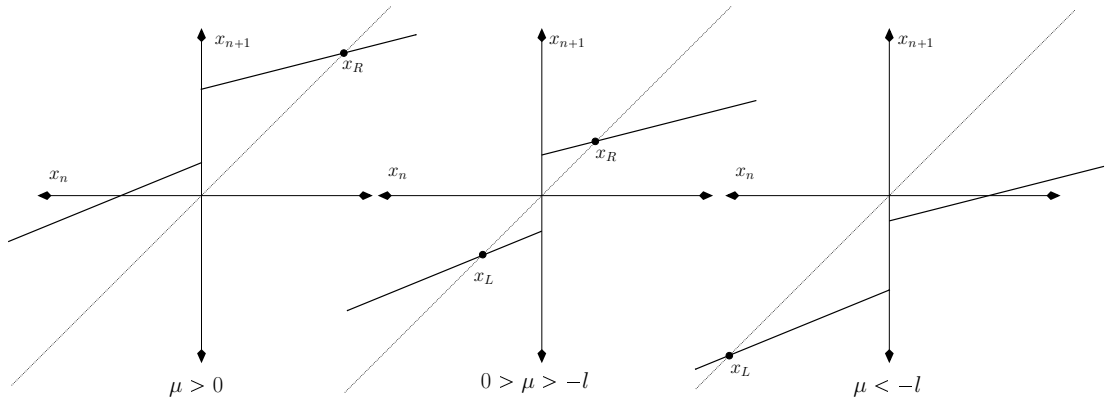}}
\caption{Graph of the map for $0<a<1$ and $0<b<1$, and $l>0$
\cite{border}} \label{bifur}
\end{center}
\end{figure}
\begin{enumerate}
\item[]{\textbf{Case 1:}} For $\mu>0$, there is a stable fixed point on the right-half plane. Location of the fixed point can be obtained from equation \eqref{basic} as $x_{R} =\frac{\mu+l}{1-b}$.
\item[]{\textbf{Case 2:}} For $0> \mu>-l$, there are two stable fixed points on both sides of the discontinuity as shown in Figure \ref{bifur}.
\item[]{\textbf{Case 3:}} For $\mu<-l$, there is a stable fixed point in the left half plane and it is given by $x_{L}=\frac{\mu}{1-a}$.
\end{enumerate}
\par
It may be observed that the left half of the map intersects the $45^{\circ}$ line for $\mu < 0$ and the right half of the map intersects this line for $\mu > -l$. This implies that the fixed point $x_{L}$ collides with the border at $\mu = 0$ and the fixed point $x_{R}$ collides with the border at $\mu =-l$. Therefore two border collision events are expected as $\mu$ is varied.
\par
Three additional cases may be observed when $l<0$ (see Figure \ref{bifur1}).
\begin{enumerate}
\item[]{\textbf{Case 4:}} For $\mu<0$, there is a stable fixed point in the left half plane and it is given by $x_{L}=\frac{\mu}{1-a}$.
\item[]{\textbf{Case 5:}} For $-l>\mu>0$, there is no fixed point. 
\item[]{\textbf{Case 6:}} For $\mu >-l$, there is another stable fixed point in the right half plane: $x_{R} =\frac{\mu+l}{1-b}$.
\end{enumerate}
\begin{figure}
\begin{center}
\scalebox{.5}{
\input{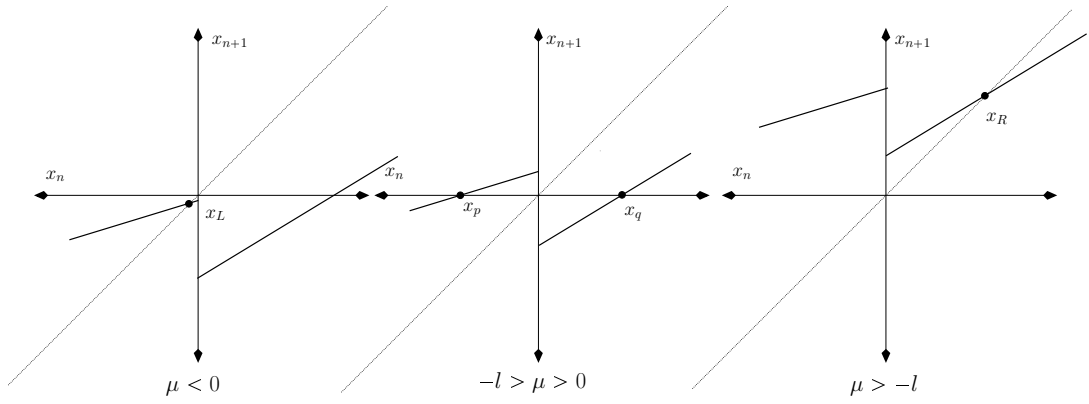}}
\caption{Graph of the map for $0<a<1$ and $0<b<1$, and $-l>0$}
\label{bifur1}
\end{center}
\end{figure}
\end{subfigures}
Case 5 is the most interesting amongst the six listed above as it contains no fixed point. This case has been analyzed in detail in this paper. As $l$ is independent of $\mu$, without loss of generality, it may be assumed that $l=-1$. One can see that when $x_{n}\in(x_{p}, 0]$ (which is in the closed left half plane), $x_{n+1}$ belongs to the right half plane. As the system is stable, it can be concluded that after some $k$ steps, the point $x_{n+k}$ returns to the left half plane again. This leads us to the following questions:
\begin{itemize}
\item Do periodic orbits exist for such systems?
\item If yes, then how to characterize them?
\end{itemize}

These questions are answered in this paper. It may be noted that the range of $\mu$ is crucial to determine the existence of orbits. Only when $0<\mu<-l$, is there a possibility of existence of orbits whereas for all other ranges of $\mu$, only fixed points exist. This motivates one to find the range of $\mu$ for the existence of certain specific kind of orbits of prescribed periodicity. It is shown in this paper, that a complete characterization of all orbits based on the range of $\mu$ is possible. 
\par

\pagebreak
%========================================================
\section{Preliminaries}
First we define the term periodic orbit \cite{aligood}.
\begin{definition}\label{periodic}
Let $f$ be a map from $\mathbb{R}$ to $\mathbb{R}$. We call $p$ a periodic point of period $k$ if $f^k(p)=p$, where $k$ is the smallest such positive integer. The orbit with initial point $p$ (which consists of $k$ points) is called a periodic orbit of period k. We will often use the abbreviated terms period-$k$ point and period-$k$ orbit for a periodic orbit having period-$k$.
\end{definition}

Let $\mathcal{L}=(-\infty,\,0]$ (the left half plane) and $\mathcal{R}=(0,\,\infty)$ (the right half plane). Given a particular sequence of points $\{x_n\}_{n\geq 0}$ through which the system evolves, one can convert (code) this sequence into a sequence of \cLs and \cRs by indicating which of the two sets ($\mathcal{L}$ or $\mathcal{R}$) the corresponding point belongs to. Clearly, a periodic orbit has a string of \cLs and \cRs that keeps repeating. We call this repeating string, a \emph{pattern} and denote it by $\sigma$. The length of the string $\sigma$ is denoted by $|\sigma |$ and gives the number of symbols in the pattern i.e., the period of the orbit. A periodic orbit with a pattern $\sigma$ is denoted as $\mathcal{O_{\sigma}}$. \ps denotes the interval of parameter $\mu$ for which orbit \mo exists. The sum of geometric series $1+k+k^2+\ldots+k^n$ is denoted by $S_n^k$.

\begin{definition}
A periodic orbit \mo is termed as admissible if \ps$\neq\phi$. The pattern of an admissible orbit is called an admissible pattern.
\end{definition}
\begin{definition}
If a pattern of a periodic orbit \mo consists of only one \cR and multiple \cLs or vice-versa, it is called an atomic pattern.
\end{definition}
Thus, there are two types of atomic patterns; those with pattern $\overbrace{\mathcal{LLL\cdots\cdots LL}}^n\mathcal{R}$, abbreviated as \ml (termed as \cL-atomic pattern) and those with pattern $\mathcal{L}\overbrace{\mathcal{RRR\cdots\cdots RR}}^n$, abbreviated as \mr (termed as \cR-atomic pattern). The pattern $\mathcal{LR}$ is both, \cL-atomic as well as \cR-atomic.
\begin{definition}
A pattern is called a molecular pattern if it is made up of a combination of atomic patterns.
\end{definition}
\begin{example}
$\mathcal{LLRLLRLR}$ is a molecular pattern. It is made up by combining the atomic patterns $\mathcal{LLR}$ and $\mathcal{LR}$.
\end{example}

\section{Analysis of Periodic Orbits}

%======================
\begin{Lemma}[Atomic Lemma]\label{atomic}
An atomic pattern of any period is admissible.
\end{Lemma}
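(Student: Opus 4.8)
The plan is to treat the two atomic families separately and, in each case, produce the periodic point explicitly as a function of $\mu$ and then read off a nonempty parameter interval, so that \ps is shown to be a genuine (nondegenerate) interval rather than empty. Throughout I use the normalization $l=-1$ and work in the interesting range $0<\mu<1$, writing $f_L(x)=ax+\mu$ and $f_R(x)=bx+\mu-1$ for the two branches.

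Consider first the \cL-atomic pattern \ml and label the orbit points $x_0,\dots,x_n$ with $x_0,\dots,x_{n-1}\in\mathcal{L}$ and $x_n\in\mathcal{R}$. Periodicity forces $x_0$ to be a fixed point of the composition $f_R\circ f_L^{\,n}$. Since both branches are affine, this composition is affine with slope $ba^n\in(0,1)$, so it has a unique fixed point. Iterating the left branch gives $x_k=a^k x_0+\mu S_{k-1}^a$, and solving the single scalar fixed-point equation $x_0=f_R(f_L^{\,n}(x_0))$ (whose denominator is the constant $1-ba^n>0$) expresses $x_0$, and hence every $x_k$ and in particular $x_n$, as an explicit affine function of $\mu$.

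Next I would reduce admissibility to just two inequalities. Because $f_L$ is increasing and, for $\mu>0$, maps every point below its fixed point $x_L=\mu/(1-a)>0$ strictly upward, the sub-orbit $x_0<x_1<\cdots<x_{n-1}$ is monotone increasing; hence the $n$ membership conditions $x_0,\dots,x_{n-1}\le 0$ collapse to the single condition $x_{n-1}\le 0$, to which I adjoin $x_n>0$ (and $x_0\le 0$ then follows). Substituting the closed forms, $x_n>0$ becomes $\mu>a^n/S_n^a$ while $x_{n-1}\le 0$ becomes $\mu\le a^{n-1}/(S_{n-1}^a+ba^{n-1})$, so \ps is the interval $\bigl(a^n/S_n^a,\;a^{n-1}/(S_{n-1}^a+ba^{n-1})\bigr]$. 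The crux, and the main obstacle, is showing this is nonempty: cross-multiplying (both denominators are positive) and using the identity $S_n^a=1+aS_{n-1}^a$, the required strict inequality collapses to $ba^n<1$, which holds since $a,b\in(0,1)$; the same estimates place the interval inside $(0,1)$. This is precisely the point where the contraction/stability of $f_R\circ f_L^{\,n}$ does the work.

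Finally, the \cR-atomic pattern \mr is handled by the mirror-image argument with $f_R$ in the role of $f_L$: its fixed point $x_R=(\mu-1)/(1-b)<0$ pulls the \cR-sub-orbit monotonically downward, so the admissibility conditions again reduce to two linear inequalities in $\mu$, and the nonemptiness check reduces to $ab^n<1$. Since \cL\cR is simultaneously \cL-atomic and \cR-atomic, it is covered by either case, which completes the argument. Beyond the single inequality $ba^n<1$, the only subtleties to watch are keeping track of which inequalities are strict (the boundary point $0$ lies in $\mathcal{L}$, so $x_{n-1}\le 0$ is closed while $x_n>0$ is open) and verifying carefully that the relevant sub-orbit really is monotone so that the reduction to two inequalities is valid.
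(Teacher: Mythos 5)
Your proof is correct and follows essentially the same route as the paper: solve the affine fixed-point equation for $x_0$, substitute to turn the membership conditions into bounds on $\mu$, arrive at the identical interval $\bigl(\frac{a^n}{S_n^a},\,\frac{a^{n-1}}{a^{n-1}b+S_{n-1}^a}\bigr]$, and verify nonemptiness via the same computation (your $ba^n<1$ is exactly the paper's contradiction $-a^{n-1}(1-a^nb)>0$). Your monotonicity observation ($x_0<x_1<\cdots<x_{n-1}$ below the virtual fixed point $x_L$, and the mirror statement for $f_R$) is a modest refinement rather than a different approach: it rigorously identifies which two inequalities are binding, a point the paper handles implicitly by taking $\mu_1=\max_i\mu_i^{lower}$, $\mu_2=\min_i\mu_i^{upper}$ and appealing to ``simple algebraic manipulation.''
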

\begin{IEEEproof}
Consider an atomic orbit \oln with period $n+1$. We write down the inequalities as: 
\begin{align*}
x_{0}&\leq0 ,\\
x_1&=ax_0+\mu\leq0 ,\\
x_2&=ax_1+\mu\leq0,\\
&=a^2x_0+(a+1)\mu\leq0,\\
\vdots\nn\\
x_{n-1}&=a^{n-1}x_0+\mu S_{n-2}^a\leq 0,\\
x_n&=a^{n}x_0+\mu S_{n-1}^a>0,\\
x_{n+1}&=x_{0}=bx_n+\mu-1\leq0,\\
\therefore\, x_0 &=\frac{(a^{n-1}b+a^{n-2}b+\ldots+ab+b+1)\mu-1}{1-a^{n}b}.
\end{align*}
\par
Substituting the value of $x_0$ into the list of inequalities above, would yield a list of upper bounds for $\mu$ (whenever the point $x_i$ is in \cL) and lower bounds for $\mu$ (when the point $x_i$ is in \cR). We denote upper bounds by $\mu^{upper}_i$ and lower bounds by $\mu^{lower}_i$.
We define
$\mu_{2}=\underset{i}{min}(\mu^{upper}_{i})$ and  $\mu_{1}=\underset{i}{max}(\mu^{lower}_i)$. Therefore, \ps$=(\mu_{1},\,\mu_{2}]$. A simple algebraic manipulation of the inequalities above gives:
\ben
\mathcal{P_{L^{\mathrm{n}}R}}=\mathbf{\left(\frac{a^{n}}{S_n^a},\quad \frac{a^{n-1}}{a^{n-1}b+S_{n-1}^a}\right]}.
\een
Let us assume $\mathcal{P_{L^{\mathrm{n}}R}}=\phi$.
\begin{align*}
\therefore\, & \frac{a^{n}}{S_n^a}>\frac{a^{n-1}}{a^{n-1}b+S_{n-1}^a}.\\
\therefore\, & a^n\times \bigg(a^{n-1}b+S_{n-1}^a\bigg)-a^{n-1}\times \bigg(S_n^a\bigg)>0.\\
\therefore\, & a^{n-1}\bigg[a^nb+aS_{n-1}^a-S_n^a\bigg]>0.\\
\therefore\, & -a^{n-1}(1-a^nb)>0.
\end{align*}
which is a contradiction as $a,b\in(0,\, 1)$. Hence $\mathcal{P_{L^{\mathrm{n}}R}}\neq\phi$.
\par
Similarly, consider an atomic orbit \orn. We write down the inequalities as: 
\begin{align*}
x_{0}&\leq0,\\
x_{1}&>0,\\
\vdots\\
x_{n+1}&=x_{0}\leq0.\\
\therefore\,x_{0}&=\frac{(b^{n-1}+b^{n-2}+\ldots+b+1)(\mu-1)+b^n\mu}{1-b^{n}a}
\end{align*}
Finding $\mu_{1}$ and $\mu_{2}$ in the way as explained above, we get
\ben
\mathcal{P_{LR^{\mathrm{n}}}}=\mathbf{\left(\frac{ab^{n-1}+S_{n-2}^b}{ab^{n-1}+S_{n-1}^b},\quad \frac{S_{n-1}^b} {S_{n}^b}\right]}
\een
Further, it can be easily checked that $\mathcal{P_{LR^{\mathrm{n}}}}\neq\phi$.
\end{IEEEproof}

\begin{example}
Let us consider an orbit $\mathcal{O_{LR}}$. Here $x_{0}\leq0,\,x_{1}>0$ and $x_{2}=x_{0}$. From equation \eqref{basic} 
\begin{align}
x_{1}&=ax_{0}+\mu.\nn\\
x_{2}&=bx_{1}+\mu-1, \nn\\
&=abx_{0}+(b+1)\mu-1,  \nn\\
&=x_{0}.\nn\\
\therefore\,x_{0}&=\frac{(b+1)\mu-1}{1-ab}\leq0 \label{x0}.\\
\therefore\,\mu&\leq\frac{1}{b+1}.\nonumber
\end{align}
Substituting the value of $x_{0}$ in $x_{1}$ we get:
\begin{align}
x_{1}&=a\frac{(b+1)\mu-1}{1-ab}+\mu >0 \label{x1}.\\
\therefore\,\mu&>\frac{a}{1+a}.\nonumber
\end{align}
Hence $\mathcal{P_{LR}}=\left(\frac{a}{1+a},\,\frac{1}{1+b}\right]$.  
\par
For example, if we assume $a=\frac{1}{2}$ and $b=\frac{1}{3}$, then $\mathcal{P_{LR}}=\left(\frac{1}{3},\, \frac{3}{4}\right]$. Assume $\mu=\frac{3}{5}$. We substitute the values of $a,b,l$ and $\mu$ to find $x_{0},x_{1}$ and $x_{2}$. From equation \eqref{x0} and \eqref{x1}: 
\begin{align*}
x_{0}=x_{2}=\frac{-6}{25}.\\
x_{1}=\frac{12}{25}.
\end{align*}
Above analysis shows that orbit $\mathcal{O_{LR}}$ has one point in the closed left half plane and other point in the open right half plane and hence its pattern is $\sigma=\mathcal{LR}$. Moreover, $x_2=x_0$ shows that this is in fact a period-$2$ orbit. $\mathcal{P_{LR}}=\left(\frac{1}{3},\, \frac{3}{4}\right]$ gives the range for $\mu$  where the orbit $\mathcal{O_{LR}}$ is admissible.
\end{example}

\begin{note}
The map given by equation \eqref{basic} is invariant under transformation $f(x,a,b,\mu,l)\rightarrow f(-x,b,a,-[\mu+l],l)$. Due to replacement of $x$ by $-x$, involved patterns will be inverted (i.e., \cLs will become \cRs and vice-versa). Therefore, for the sake of simplicity, we will only consider \cL-atomic patterns. The results will be directly applicable to \cR-atomic patterns through the transformation mentioned above.
\end{note}

%==========================================================
\subsection{Problem Formulation and Analysis}
%=======================================================
We have proved that atomic orbits are admissible. Additionally, we have obtained a closed form solution for the range of $\mu$ for these atomic orbits. This leads to following questions:
\begin{enumerate}
\item Are atomic orbits the only kind of orbits? For example, can there be an orbit like $\mathcal{O_{LLLRR}}$?
\item Can we characterize all the possible types of admissible orbits?
\item For a given $n$, how many distinct patterns exist with period $n$?
\item Is there any algorithm to generate all the admissible patterns?

\end{enumerate}
\par
In this paper we provide answers to all the above questions. We take the first step towards characterization of all possible types of admissible patterns, by proving that certain combinations of \cLs and \cRs cannot appear in any admissible pattern $\sigma$.

%============================

\begin{Lemma}\label{thm:funda}
For any admissible orbit $\mathcal{O_{\sigma}}$, its pattern cannot contain consecutive \cLs and consecutive \cRs simultaneously.
\end{Lemma}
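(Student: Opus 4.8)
The plan is to argue by contradiction. Assume $\mathcal{O_\sigma}$ is admissible and that its pattern contains a consecutive pair $\mathcal{LL}$ and, somewhere else, a consecutive pair $\mathcal{RR}$. Since the orbit is periodic I read the pattern cyclically; because both symbols occur, every maximal run of $\mathcal{L}$'s is entered from an $\mathcal{R}$-point and every maximal run of $\mathcal{R}$'s is entered from an $\mathcal{L}$-point. From the $\mathcal{LL}$ block I will extract an upper bound on $\mu$, from the $\mathcal{RR}$ block a lower bound, and then show the two bounds are mutually exclusive for all $a,b\in(0,1)$, so that $\mathcal{P_\sigma}=\phi$.

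The key preliminary step, which I expect to be the main obstacle, is to localise every orbit point, since this is the only place where the global structure of the orbit (rather than a single transition) enters. Within a run of $\mathcal{L}$'s the orbit values strictly increase, because $x_{i+1}-x_i=-(1-a)x_i+\mu>0$ for $x_i\le 0$; hence the smallest point of such a run is its first point, which is produced from the preceding $\mathcal{R}$-point $x>0$ by $bx+\mu-1>\mu-1$. Therefore every $\mathcal{L}$-point satisfies $\mu-1<x\le 0$. Symmetrically, within a run of $\mathcal{R}$'s the orbit values strictly decrease (using $\mu<1$, as $0<\mu<-l=1$), so the largest $\mathcal{R}$-point is its first point, produced from the preceding $\mathcal{L}$-point $x\le 0$ by $ax+\mu\le\mu$; hence every $\mathcal{R}$-point satisfies $0<x\le\mu$. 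Once these two bands are established, the remaining computations are one-line.

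Finally I apply the two blocks. For the $\mathcal{LL}$ pair $x_i,x_{i+1}$ with $x_{i+1}=ax_i+\mu\le 0$, the localisation $x_i>\mu-1$ gives $\mu\le -ax_i<a(1-\mu)$, i.e. $\mu<\frac{a}{1+a}$. For the $\mathcal{RR}$ pair $x_j,x_{j+1}$ with $x_{j+1}=bx_j+\mu-1>0$, the localisation $x_j\le\mu$ gives $0<bx_j+\mu-1\le(1+b)\mu-1$, i.e. $\mu>\frac{1}{1+b}$. Since $a,b\in(0,1)$ we have $\frac{a}{1+a}<\frac12<\frac{1}{1+b}$, so the requirements $\mu<\frac{a}{1+a}$ and $\mu>\frac{1}{1+b}$ are incompatible. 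This contradicts the admissibility of $\mathcal{O_\sigma}$, proving that no admissible pattern can contain $\mathcal{LL}$ and $\mathcal{RR}$ simultaneously.
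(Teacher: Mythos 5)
Your proof is correct, and it arrives at exactly the paper's two thresholds: a consecutive pair $\mathcal{LL}$ forces $\mu<\frac{a}{1+a}$, a consecutive pair $\mathcal{RR}$ forces $\mu>\frac{1}{1+b}$, and since $\frac{a}{1+a}<\frac{1}{2}<\frac{1}{1+b}$ for $a,b\in(0,1)$ the two are incompatible — the same contradiction the paper draws. The difference is in how the global orbit structure is brought in. The paper argues from three-point fragments: it assumes $x_0\le 0$, $x_1>0$ together with the return condition $x_2\le x_0$ (and the mirror assumptions $x_0>0$, $x_1\le 0$, $x_2>x_0$ for the other bound) and then substitutes the sign of $x_0$; it does not spell out why an arbitrary admissible pattern containing $\mathcal{RR}$ must realize a fragment satisfying those auxiliary inequalities. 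You instead first establish an invariant localisation of all orbit points: values strictly increase along an $\mathcal{L}$-run and strictly decrease along an $\mathcal{R}$-run (the latter using $0<\mu<1$, which you correctly flag), every maximal run is entered from the opposite half-plane in a cyclic pattern containing both symbols, hence every $\mathcal{L}$-point lies in $(\mu-1,\,0]$ and every $\mathcal{R}$-point in $(0,\,\mu]$; each forbidden pair then yields its bound in one line. This band argument is precisely the justification that the paper's substitutions $x_0\le 0$ and $x_0>0$, and its summary claims (``any $\mathcal{R}$ is immediately followed by $\mathcal{L}$ when $\mu\le\frac{1}{b+1}$,'' etc.), tacitly rely on, so your version is a self-contained and somewhat more rigorous rendering of the same computation rather than a new method; what it buys is an explicit account of where the global periodic structure and the restriction $0<\mu<-l$ are actually used.
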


\begin{IEEEproof}
We know, $-l=1$ and $0<\mu<1$. We first find conditions on $\mu$ such that consecutive \cRs do not appear. Let us assume $x_{0}\leq0,x_1>0$ and $x_{2}\leq x_0$. Then from equation \eqref{basic}
\begin{align*}
x_{1}&=ax_{0}+\mu>0,\\
x_2=&abx_{0}+(b+1)\mu-1\leq x_0,\\
\therefore\,\mu\leq&\frac{(1-ab)x_0+1}{b+1}.
\end{align*}
substituting $x_0\leq 0$ in above equation we get, $\mu\leq \frac{1}{b+1}$.
\par
Now we find conditions on $\mu$ such that consecutive \cLs do not appear. Let us assume $x_{0}>0,x_1\leq 0$ and $x_{2}>x_0$. Then from equation \eqref{basic}
\begin{align*}
x_{1}&=bx_{0}+\mu-1\leq0,\\
x_2&=abx_0+(a+1)\mu-a>x_0.\\
\therefore\, \mu&>\frac{(1-ab)x_0+a}{a+1}.
\end{align*}
Substituting $x_0>0$ in above equation we get, $\mu>\frac{a}{a+1}$. Since $a,b\in(0,\,1)\Rightarrow\,\frac{a}{a+1}<\frac{1}{b+1}$. This proves the lemma.
\end{IEEEproof}

Summarizing, we can say:
\begin{itemize}
\item When $0<\mu\leq\frac{1}{b+1}$, then any \cR is always immediately followed by \cL. So in this range, patterns with consecutive \cRs do not exist (see Figure \ref{case3}).
\item When $\frac{a}{a+1}<\mu\leq1$, then any \cL is immediately followed by \cR. So in this range, patterns with consecutive \cLs do not exist (see Figure \ref{case2}).
\item When $\frac{a}{a+1}<\mu\leq\frac{1}{b+1}$, only possible pattern is $\mathcal{LR}$.
\end{itemize}
\begin{subfigures}
\begin{figure}[ht!]
\begin{center}
\scalebox{.7}{
\input{case3.pstricks}}
\caption{The range of `$\mu$' where only singleton \cR is possible.}
\label{case3}
\end{center}
\end{figure}
\begin{figure}[ht!]
\begin{center}
\scalebox{.7}{
\input{case2.pstricks}}
\caption{The range of `$\mu$' where only singleton \cL is possible.}
\label{case2}
\end{center}
\end{figure}
\end{subfigures}

This lemma helps us predict whether certain patterns are admissible or in-admissible e.g., $\mathcal{LLRR}$ and $\mathcal{LLRLRRLR}$ are clearly not admissible patterns as these patterns contain both consecutive \cLs and consecutive \cRs simultaneously. An important corollary of Lemma \ref{thm:funda} is that all the admissible patterns are either atomic patterns (\ml or \mr) or molecular patterns made up of purely \cL-atomic patterns or \cR-atomic patterns.
\par
 We generalize this lemma to find conditions on $\mu$ for \emph{at most} $n$ consecutive \cLs or \emph{at least} $n$ consecutive \cLs to appear in a pattern.

%==================================================================================
\begin{Lemma}[At Most \& At Least Lemma]
 When $\mu\leq\frac{a^{n-1}}{a^{n-1}b+s_{n-1}^a}$ then at least $n$ consecutive \cLs appear in the pattern and when $\mu>\frac{a^{n}}{S_n^a}$ then at most $n$ consecutive \cLs appear in the pattern.
\end{Lemma}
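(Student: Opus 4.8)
\section*{Proof proposal}

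The plan is to analyze the forward dynamics inside a single maximal run of $\mathcal{L}$'s and to determine exactly when such a run is forced to terminate. Since $l=-1$ and $a\in(0,1)$, the left branch $x\mapsto ax+\mu$ has its fixed point $\frac{\mu}{1-a}>0$ sitting in $\mathcal{R}$, so every point that starts in $\mathcal{L}$ increases monotonically, $x_0<x_1<x_2<\cdots$, until it crosses the border into $\mathcal{R}$. Writing $x_0\le 0$ for the entry point of the run, iteration gives the same closed form $x_k=a^kx_0+\mu S_{k-1}^a$ already used in the Atomic Lemma, and monotonicity means the run has length at least $m$ exactly when $x_{m-1}\le 0$, and length at most $m$ exactly when $x_m>0$. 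Thus the whole lemma reduces to two sign estimates on $x_k$, each hinging on a sharp bound for the entry value $x_0$.

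For the \emph{at most $n$} half I would bound $x_0$ from below. Any run is entered from $\mathcal{R}$, so $x_0=bx_{-1}+\mu-1$ with $x_{-1}>0$, giving the strict bound $x_0>\mu-1$. Substituting into $x_n=a^nx_0+\mu S_{n-1}^a$ and using $a^n+S_{n-1}^a=S_n^a$ yields $x_n>\mu S_n^a-a^n$, which is strictly positive precisely when $\mu>\frac{a^n}{S_n^a}$. Hence in this range the run cannot survive to index $n$, i.e. at most $n$ consecutive $\mathcal{L}$'s can occur.

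For the \emph{at least $n$} half I would instead bound $x_0$ from above, and this is the step needing the most care. Here I would first note that $\frac{a^{n-1}}{a^{n-1}b+S_{n-1}^a}\le\frac{1}{b+1}$, so Lemma~\ref{thm:funda} applies and no consecutive $\mathcal{R}$'s occur; the $\mathcal{R}$ cyclically preceding the run is therefore a singleton, i.e. $x_{-1}=ax_{-2}+\mu$ with $x_{-2}\le 0$, whence $x_{-1}\le\mu$ and $x_0=bx_{-1}+\mu-1\le(b+1)\mu-1$. Substituting this into $x_{n-1}=a^{n-1}x_0+\mu S_{n-2}^a$ and simplifying with $a^{n-1}+S_{n-2}^a=S_{n-1}^a$ gives $x_{n-1}\le\mu\bigl(a^{n-1}b+S_{n-1}^a\bigr)-a^{n-1}$, which is $\le 0$ exactly when $\mu\le\frac{a^{n-1}}{a^{n-1}b+S_{n-1}^a}$. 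By monotonicity all of $x_0,\dots,x_{n-1}$ are then $\le 0$, delivering at least $n$ consecutive $\mathcal{L}$'s.

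The routine parts are the two algebraic simplifications of the geometric sums, identical in flavor to the Atomic Lemma computation. The genuine obstacle is justifying the extremal entry bounds: I must argue that every $\mathcal{L}$-run is cyclically preceded by an $\mathcal{R}$ (there is no all-$\mathcal{L}$ orbit, since the left branch has no fixed point in $\mathcal{L}$), and for the \emph{at least} direction I must additionally rule out a double-$\mathcal{R}$ entry via Lemma~\ref{thm:funda} so as to secure $x_{-1}\le\mu$. Once these worst-case entry values are pinned down, the resulting sign conditions coincide exactly with the two stated thresholds, completing the proof.
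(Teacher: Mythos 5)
Your proposal is correct, and it reaches the two stated thresholds by a genuinely different route than the paper. The paper's proof stays inside a single assumed block $\mathcal{L}^{\mathrm{n}}\mathcal{R}$: it takes the run inequalities $x_{n-1}=a^{n-1}x_0+\mu S_{n-2}^a\leq 0$ and $x_n=a^nx_0+\mu S_{n-1}^a>0$ as bounds on $x_0$, forms the return point $x_{n+1}=ba^nx_0+\mu bS_{n-1}^a+\mu-1$, and extracts the thresholds by substituting those extremal $x_0$ values into the comparison $x_{n+1}\leq x_0$ (for \emph{at least}) or $x_{n+1}>x_0$ (for \emph{at most}). You instead bound the \emph{entry point} of a run from its cyclic predecessors --- $x_0>\mu-1$ from any $\mathcal{R}$-entry, and $x_0\leq(b+1)\mu-1$ from a singleton-$\mathcal{R}$ entry, justified by checking $\frac{a^{n-1}}{a^{n-1}b+S_{n-1}^a}\leq\frac{1}{b+1}$ so that Lemma \ref{thm:funda} forbids consecutive $\mathcal{R}$s --- and then propagate forward, using monotonicity of the left branch to conclude $x_{n-1}\leq 0$ forces the whole run below zero. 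The two computations are algebraically equivalent (your substitutions rearrange the same inequalities and yield identical thresholds), but your scaffolding proves the implication in the direction the lemma actually asserts: the $\mu$-condition forces the run length for \emph{every} maximal run, with the quantification and the potential circularity in using the closed form $x_k=a^kx_0+\mu S_{k-1}^a$ explicitly discharged by monotonicity, and with the boundary cases (no all-$\mathcal{L}$ orbit, singleton entries) named rather than left implicit. The paper's version, by contrast, is a consistency computation --- assuming the block structure and the return behavior, $\mu$ must lie in the stated range --- which is shorter, recycles the Atomic Lemma inequalities verbatim, and its comparison of successive entry points $x_{n+1}$ versus $x_0$ foreshadows the renormalized map used later in the molecular-region analysis. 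One small point worth adding to your write-up: for the \emph{at least} half you should also note that an $\mathcal{L}$-run exists at all, i.e.\ that there is no all-$\mathcal{R}$ periodic orbit (the right branch's fixed point $\frac{\mu-1}{1-b}$ lies in $\mathcal{L}$), symmetric to the all-$\mathcal{L}$ exclusion you did state.
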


\begin{IEEEproof}
Let us assume $x_0\leq 0,x_1\leq 0, \ldots, x_{n-1}\leq 0, x_n>0$. From Atomic Lemma we get,
\begin{align}
 x_{n-1}&=a^{n-1}x_0+\mu S_{n-2}^a\leq 0, \nn\\
&\therefore\, x_0\leq-\frac{\mu S_{n-2}^a}{a^{n-1}}, \label{x01}\\
x_n&=a^nx_0+\mu S_{n-1}^a>0, \nn\\
&\therefore\, x_0>-\frac{\mu S_{n-1}^a}{a^n},\label{x02}\\
x_{n+1}&=ba^nx_0+\mu bS_{n-1}^a +\mu-1. \label{x03}
\end{align}
 First we find the condition on $\mu$ such that at least $n$ consecutive \cLs appear in a pattern. For this, we assume $x_{n+1}\leq x_0$. Then from equation \eqref{x03},  
\begin{equation}\label{uvalue}
 \mu\leq \frac{(1-ba^n)x_0+1}{(1+bS_{n-1}^a)}.
\end{equation}
Substituting \eqref{x01} in \eqref{uvalue} we get,
\begin{equation}
 \mathbf{\mu\leq \frac{a^{n-1}}{ba^{n-1}+S_{n-1}^a}}.
\end{equation}
\par
Now we find the condition on $\mu$ such that at most $n$ consecutive \cLs appear in a pattern. Let us assume $x_{n+1}> x_0$. Then from equation \eqref{x03},
\begin{equation}\label{uvalue2}
 \mu> \frac{(1-ba^n)x_0+1}{(1+bS_{n-1}^a)}.
\end{equation}
Substituting \eqref{x02} in \eqref{uvalue2} we get,
\begin{equation}
 \mathbf{\mu> \frac{a^{n}}{S_{n}^a}}.
\end{equation}
\end{IEEEproof}

%=======================================================================================

At Most \& At Least Lemma (from now on we refer to it as AMAL Lemma) gives us the conditions on $\mu$ for the appearance of at most/at least $n$ consecutive \cLs in a pattern. In a similar way we can find the conditions on $\mu$ such that at most/at least $2n$ or $3n$ or $n-1$ consecutive \cLs appear in a pattern. Note that at least and at most conditions for consecutive \cRs can be found in the same fashion. It is important to note that all these conditions are placed on the parameter line $\mu$ in a specific order. We now prove that these conditions on $\mu$ are such that the admissible combinations for the molecular patterns are limited.

%==============================================================

\begin{Lemma}
Every molecular pattern is a combination of at most two atomic patterns of successive cardinality.
\end{Lemma}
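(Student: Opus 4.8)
The plan is to reduce the claim to a statement about the lengths of the maximal \cL-runs in the pattern, and then pin these lengths down using the thresholds produced by the AMAL Lemma. By the symmetry note it suffices to treat \cL-atomic and \cL-molecular patterns, and by the corollary to Lemma \ref{thm:funda} every admissible molecular pattern is a concatenation of blocks $\mathcal{L}^{k_1}\mathcal{R},\mathcal{L}^{k_2}\mathcal{R},\dots,\mathcal{L}^{k_m}\mathcal{R}$ with each $k_i\ge 1$. Writing $k=\min_i k_i$ and $K=\max_i k_i$, the lemma is equivalent to the single inequality $K-k\le 1$: if this holds, only the two successive cardinalities $k+1$ and $k+2$ (i.e.\ the atomic patterns $\mathcal{L}^k\mathcal{R}$ and $\mathcal{L}^{k+1}\mathcal{R}$) can appear.

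First I would record the endpoints furnished by the Atomic Lemma, writing $\mathcal{P}_{\mathcal{L}^m\mathcal{R}}=(L_m,U_m]$ with $L_m=a^m/S_m^a$ and $U_m=a^{m-1}/(a^{m-1}b+S_{m-1}^a)$; these are exactly the AMAL thresholds (note $U_m$ governs ``at least $m$'' and $L_m$ governs ``at most $m$''). Next I apply AMAL in its two directions to the extremal blocks. The pattern contains a run of exactly $K$ consecutive \cLs, so it is \emph{not} true that every run has length at most $K-1$; the ``at most'' half of AMAL then forces $\mu\le L_{K-1}$. Dually, the pattern contains a run of exactly $k$ consecutive \cLs, so it is \emph{not} true that every run has length at least $k+1$; the ``at least'' half of AMAL forces $\mu>U_{k+1}$. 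Since both must hold for the same admissible $\mu$, we obtain $U_{k+1}<L_{K-1}$.

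The crux is then an ordering argument on the thresholds. I would show that $m\mapsto L_m=\big(\sum_{j=0}^m a^{-j}\big)^{-1}$ is strictly decreasing, and recall from the Atomic Lemma that each atomic interval is nonempty, i.e.\ $L_m<U_m$. Suppose, for contradiction, that $K\ge k+2$; then $K-1\ge k+1$, so monotonicity gives $L_{K-1}\le L_{k+1}$, and nonemptiness gives $L_{k+1}<U_{k+1}$. Chaining these yields $L_{K-1}<U_{k+1}$, contradicting $U_{k+1}<L_{K-1}$ from the previous step. Hence $K\le k+1$, which proves the lemma. (The degenerate cases $K=k$, and $K=1$ where the ``at most'' bound is vacuous since $L_0=1\ge\mu$, are immediate.)

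The main obstacle I anticipate is not the algebra but making the two AMAL applications genuinely rigorous: I must invoke AMAL in its universal (``every maximal run'') form, so that the mere presence of a single off-length block contradicts the threshold condition, and I must watch the index bookkeeping ($K-1$ versus $k+1$) so that the monotonicity of $L_m$ bites in the correct direction. As a cleaner alternative that bypasses AMAL entirely, one can analyze the return map on block-start points and verify that every block interval $I_k=(-\mu S_{k-1}^a/a^k,\,-\mu S_{k-2}^a/a^{k-1}]$ is carried onto the \emph{same} target interval $J=(\mu-1,\,(b+1)\mu-1]$; since $|J|=b\mu<\mu/a\le|I_k|$, the interval $J$ cannot contain any full $I_k$ and so meets at most two consecutive ones, yielding the two successive cardinalities directly.
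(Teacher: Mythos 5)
Your proof is correct, and while it rests on the same two pillars as the paper's own argument (the AMAL Lemma plus facts from the Atomic Lemma), it organizes them differently. The paper proves the full interleaving of thresholds on the $\mu$-line, $\frac{a^{n}}{S_{n}^a}<\frac{a^{n-1}}{a^{n-1}b+S_{n-1}^a}<\frac{a^{n-1}}{S_{n-1}^a}$ (i.e.\ $L_n<U_n<L_{n-1}$ in your notation), so that at any given $\mu$ the active at-least/at-most conditions come from successive values of $n$; the one new inequality it must verify is $U_n<L_{n-1}$, done by a one-line contradiction. You instead contrapose AMAL at the two extremal run lengths $k$ and $K$ to get $U_{k+1}<\mu\leq L_{K-1}$, and refute $K\geq k+2$ via strict monotonicity of $L_m=\big(\sum_{j=0}^{m}a^{-j}\big)^{-1}$ together with the nonemptiness $L_{k+1}<U_{k+1}$ from the Atomic Lemma --- so you never need the paper's inequality $U_n<L_{n-1}$, at the cost of invoking AMAL explicitly in its universal (``every maximal run'') form; this is legitimate, since the paper itself tacitly uses that reading in its Cases a--c, and your index bookkeeping ($L_{K-1}\leq L_{k+1}<U_{k+1}$, with the degenerate $K=1$ case trivial because then $k=K$) is sound. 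What your write-up buys beyond the paper is the closing return-map argument, which is genuinely different and arguably cleaner: each block interval $I_k=\left(-\mu S_{k-1}^a/a^{k},\,-\mu S_{k-2}^a/a^{k-1}\right]$ is carried exactly onto the single target $J=(\mu-1,\,(b+1)\mu-1]$ (using $S_{k-1}^a-aS_{k-2}^a=1$), the $I_k$ tile the negative half-line contiguously with $|I_k|=\mu/a^{k}$, and $|J|=b\mu<\mu/a\leq|I_k|$ forces $J$ to meet at most two consecutive tiles --- yielding the lemma directly from the dynamics without AMAL, and incidentally explaining the paper's later renormalization step, whose border $x_{new}=-\mu S_{n-2}^a/a^{n-1}$ is precisely a common endpoint of your tiles.
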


\begin{IEEEproof} From AMAL Lemma we get the at most/at least conditions on $\mu$. The statement of this lemma is equivalent to showing that on the $\mu$ parameter line (see Figure \ref{examp2}), at any given point, the two active conditions (one at least and one at most) come from succesive values of $n$. This is equivalent to showing that $\frac{a^{n}}{S_{n}^a}<\frac{a^{n-1}}{a^{n-1}b+S_{n-1}^a}<\frac{a^{n-1}}{S_{n-1}^a}$ for every $n\geq 2$. From Atomic Lemma we know that $\frac{a^{n}}{S_{n}^a}<\frac{a^{n-1}}{a^{n-1}b+S_{n-1}^a}$. Hence to prove this lemma it is enough to prove that $\frac{a^{n-1}}{a^{n-1}b+S_{n-1}^a}<\frac{a^{n-1}}{S_{n-1}^a}$. Let us assume 
\begin{align*}
 \frac{a^{n-1}}{a^{n-1}b+S_{n-1}^a}&>\frac{a^{n-1}}{S_{n-1}^a},\\
\therefore\, a^{n-1}S_{n-1}^a&>(a^{n-1}b+S_{n-1}^a)a^{n-1},\\
\therefore\, 0&>a^{2(n-1)}b.
\end{align*}
This is a contradiction as $a,b \in (0,\,1)$. Hence $\frac{a^{n-1}}{a^{n-1}b+S_{n-1}^a}<\frac{a^{n-1}}{S_{n-1}^a}$
\end{IEEEproof}

We summarize the above lemma into the following cases:
\begin{enumerate}
\item[]{\textbf{General Case:} $\mathbf{\mu\in\big(\frac{a^{n}}{S_{n}^a},\, \frac{a^{n-2}}{a^{n-2}b+S_{n-2}^a}\big]}$.} All the patterns consist of either $n-1$ or $n$ consecutive \cLs or a combination of both. 
\item[]{\textbf{Case a:} $\mathbf{\mu\in\big(\frac{a^{n}}{S_{n}^a},\, \frac{a^{n-1}}{a^{n-1}b+S_{n-1}^a}\big]}$.} The only possible pattern is the pattern having exactly $n$ consecutive \cLs and a single \cR. This is the pattern \ml. The above range of $\mu$ is nothing but the value of $\mathcal{P_{L^\mathrm{n}R}}$.
\item[]{\textbf{Case b:} $\mathbf{\mu\in\big(\frac{a^{n-1}}{a^{n-1}b+S_{n-1}^a},\,\frac{a^{n-1}}{S_{n-1}^a}\big]}$.} Only a combination of $n-1$ and $n$ consecutive \cLs \- appear in a pattern. Hence atomic orbits cannot exist here. We call this region as a \emph{molecular region}.
\item[]{\textbf{Case c:} $\mathbf{\mu\in\big(\frac{a^{n-1}}{S_{n-1}^a},\, \frac{a^{n-2}}{a^{n-2}b+S_{n-2}^a}\big]}$.} The only possible pattern is the pattern having exactly $n-1$ consecutive \cLs \- and a single \cR. This is the pattern of $\mathcal{L^{\mathrm{n-1}}R}$. The above range of $\mu$ is nothing but the value of $\mathcal{P_{L^\mathrm{n-1}R}}$.
\end{enumerate}
\begin{figure}[ht!]
\begin{center}
\scalebox{.7}{
\input{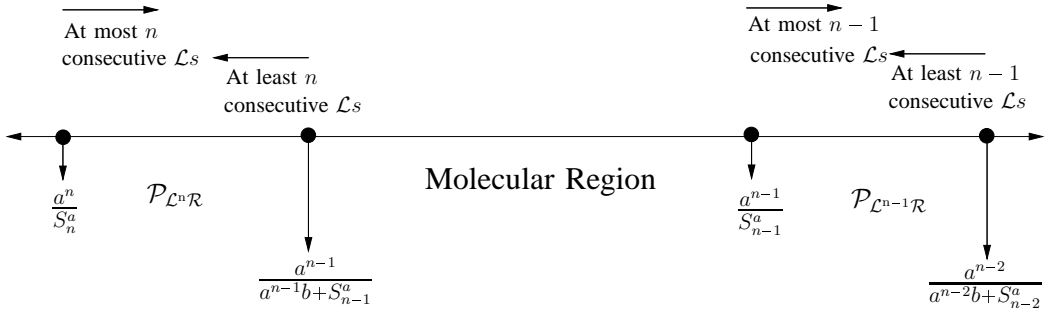}}
\caption{Position of cases with respect to range of parameter $\mu$}
\label{examp2}
\end{center}
\end{figure}
\begin{note} 
Consider an orbit $\mathcal{O_{L^{\mathrm{n}}RL^{\mathrm{n-1}}R}}$. Its pattern is a combination of patterns \ml and $\mathcal{L^{\mathrm{n-1}}R}$.  The range of $\mu$ for existence of $\mathcal{O_{L^{\mathrm{n}}RL^{\mathrm{n-1}}R}}$, $\mathcal{P_{L^{\mathrm{n}}RL^{\mathrm{n-1}}R}}$ can be calculated as 
\begin{equation*}
\mathbf{\mathcal{P_{L^{\mathrm{n}}RL^{\mathrm{n-1}}R}}=\Big(\frac{a^{2n-1}b+a^{n-1}}{a^{2n-1}b+(a^{n-1}b+1)S_{n-1}^a},\,\frac{a^{2n-2}b+a^{n-1}}{a^{2n-2}b^{2}+(a^{n-1}b+1)S_{n-1}^a}\Big]}.
\end{equation*}
Case b above tells us that $\mathcal{P_{L^{\mathrm{n}}RL^{\mathrm{n-1}}R}}$ is placed between $\mathcal{P_{L^\mathrm{n}R}}$ and $\mathcal{P_{L^\mathrm{n-1}R}}$ i.e., in the molecular region. This shows that the patterns are not arranged monotonically with respect to their cardinalities.
\end{note}
\begin{Lemma}\label{repetition}
 No molecular pattern is a repetition of any single atomic pattern. 
\end{Lemma}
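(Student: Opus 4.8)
The plan is to argue by contradiction, exploiting the fact that the first-return map along one copy of an atomic pattern is an affine contraction, and that such a map can have no periodic point of minimal period greater than one. By the Note following the Atomic Lemma (Lemma \ref{atomic}) it suffices to treat \cL-atomic patterns, so I would write the atomic pattern as $\tau=\mathcal{L}^m\mathcal{R}$ with $|\tau|=m+1$, and suppose for contradiction that $\sigma=\tau^k$ with $k\geq 2$ is the pattern of an admissible orbit $\mathcal{O_{\sigma}}$. By Definition \ref{periodic} such an orbit would have minimal period $k(m+1)$, and (after a cyclic relabelling) its starting point $x_0$ would lie in \cL and follow the itinerary $\tau$ on each of the $k$ successive blocks.

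First I would make explicit the map that advances the orbit by exactly one block $\tau$. Along the itinerary $\mathcal{L}^m\mathcal{R}$ the map $f$ of \eqref{basic} acts as the left branch $x\mapsto ax+\mu$ for $m$ consecutive steps and then as the right branch $x\mapsto bx+\mu-1$ once; composing these affine pieces shows that the return-after-$(m+1)$-steps map is itself affine, $F(x)=a^{m}b\,x+c$, where the slope $a^{m}b$ is precisely the product of the branch slopes and $c$ depends only on $a,b,\mu$. This is exactly the computation already carried out to solve for $x_0$ in the proof of Lemma \ref{atomic}.

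The decisive step is then a statement purely about $F$. Since $a,b\in(0,\,1)$ we have $a^{m}b\in(0,\,1)$, so $F$ is a contraction with the unique fixed point $x^{*}=c/(1-a^{m}b)$; moreover $F^{k}(x)=(a^{m}b)^{k}x+c\,\tfrac{1-(a^{m}b)^{k}}{1-a^{m}b}$, so solving $F^{k}(x)=x$ and cancelling the nonzero factor $1-(a^{m}b)^{k}$ gives $x=x^{*}$. Hence $F$ has no point of minimal period at least $2$. Because $x_0$ follows $\tau$ on each block, I have $x_{m+1}=F(x_0)$ and, by periodicity, $F^{k}(x_0)=x_{k(m+1)}=x_0$; the previous line then forces $x_0=x^{*}=F(x_0)$, so in fact $x_{m+1}=x_0$ and the orbit closes up already after $m+1$ steps.

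This contradicts the assumed minimality of the period $k(m+1)$: the ``orbit'' with itinerary $\tau^{k}$ is nothing but the atomic orbit $\mathcal{O_{\tau}}$ traversed $k$ times, whose genuine pattern is $\tau$, not $\tau^{k}$. Therefore $\tau^{k}$ is not the pattern of any periodic orbit in the sense of Definition \ref{periodic}, i.e. it is not an admissible molecular pattern, and invoking the Note once more transfers the conclusion to \cR-atomic patterns. I expect the only real subtlety to be phrasing the ``contraction kills higher periods'' step rigorously, but since $F$ is affine with slope in $(0,\,1)$ this collapses to the one-line observation that $(a^{m}b)^{k}\neq 1$.
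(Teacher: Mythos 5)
Your proof is correct and takes essentially the same route as the paper: both reduce the problem to the affine first-return map over one atomic block, $F(x)=a^{m}b\,x+c$ with slope in $(0,\,1)$, and deduce from $F^{k}(x_0)=x_0$ that $x_0$ must already be the fixed point of $F$, so the orbit closes after a single block and the true pattern is the atomic one. If anything, your write-up is slightly more explicit than the paper's, which asserts $x_{i(m+1)}=x_0$ for all $i$ without spelling out the cancellation of the nonzero factor $1-(a^{m}b)^{k}$ that you make precise.
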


\begin{IEEEproof}
Consider an orbit whose pattern is a repetition of one atomic pattern, say $(\mathcal{L^{\mathrm{n}}R})^k$. From equation \eqref{basic}, one can find a relation between $x_0$ and $x_{n+1}$ given by $x_{n+1} = a^nbx_0 + b\mu S^{a}_{n-1} + \mu - l$ which represents an affine relation between $x_0$ and $x_{n+1}$. Note that the relation between $x_{n+1}$ and $x_{2n+2}$ is exactly the same as the relation between $x_0$ and $x_{n+1}$. In general, the relation between $x_{(i-1)(n+1)}$ and $x_{i(n+1)}$ is exactly the same as the affine relation between $x_0$ and $x_{n+1}$. Since we are assuming that the pattern is $(\mathcal{L^{\mathrm{n}}R})^k$, therefore $x_{k(n+1)}=x_{0}$. Since there is the same affine relation between $x_{(i-1)(n+1)}$ and $x_{i(n+1)}$ for $i = 1, 2, \ldots k$, therefore one can conclude that $x_{i(n+1)}= x_0$ for all $i = 1, 2, \ldots, k$. Thus, the orbit is really an orbit with the pattern \ml. 
\end{IEEEproof}

Putting the last two lemmas together, one can conclude that
\begin{Lemma}\label{only2}
 Every molecular pattern is a combination of exactly two atomic patterns of succesive cardinality. 
\end{Lemma}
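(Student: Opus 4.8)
The plan is to synthesize the two immediately preceding results, since the statement is a direct strengthening obtained by combining them. The ``at most two'' lemma tells us that whatever molecular pattern we start with, every \cL-block occurring in it has length among at most two \emph{successive} values $n-1$ and $n$; equivalently, its only possible atomic constituents are the two \cL-atomic patterns \ml and $\mathcal{L^{\mathrm{n-1}}R}$, of successive cardinality $n+1$ and $n$. This already caps the number of distinct atomic building blocks at two. What remains is to upgrade ``at most two'' to ``exactly two'' by excluding the degenerate case in which only a single atomic block type actually appears.

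First I would note that the conclusion of the ``at most two'' lemma can be realized in only two ways: either every block in the pattern is the same atomic pattern, so that the whole string is a pure repetition $(\ml)^k$ or $(\mathcal{L^{\mathrm{n-1}}R})^k$ for some $k\geq 1$; or both blocks \ml and $\mathcal{L^{\mathrm{n-1}}R}$ genuinely occur. The latter possibility is exactly the assertion to be proved, so the entire content of the proof is to eliminate the former.

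To eliminate the pure-repetition case I would invoke Lemma \ref{repetition} directly. If the pattern were $(\ml)^k$, then by Lemma \ref{repetition} the orbit is in fact an orbit with the atomic pattern \ml, and the symmetric argument applies to $(\mathcal{L^{\mathrm{n-1}}R})^k$. In either subcase \mo is atomic rather than molecular, contradicting the standing hypothesis that we began with a molecular pattern. Hence no molecular pattern can use only one atomic block type, and the only surviving alternative is that both \ml and $\mathcal{L^{\mathrm{n-1}}R}$ appear, giving a combination of exactly two atomic patterns of successive cardinality, as claimed.

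The argument is a short combining step, not a fresh computation, so I do not expect a substantive obstacle. The only point deserving care is the cardinality bookkeeping: one must use the ``at most two'' lemma not merely to bound the count of distinct blocks, but to certify that the two surviving blocks differ in length by exactly one (they are the consecutive atomic patterns \ml and $\mathcal{L^{\mathrm{n-1}}R}$). This is what justifies the phrase ``exactly two atomic patterns of successive cardinality'' in full, rather than the weaker ``exactly two.''
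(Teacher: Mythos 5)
Your proposal is correct and takes essentially the same route as the paper, which proves this lemma with the single remark ``putting the last two lemmas together'': the at most/at least analysis caps the constituents at two atomic patterns of successive cardinality, and Lemma~\ref{repetition} excludes the degenerate pure-repetition case. You have merely made explicit the short case analysis that the paper leaves implicit, including the correct observation that molecularity is contradicted when only one block type occurs.
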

% \begin{note}\label{repetition}
%  In our counting of orbits for a given period $n$, molecular patterns consisting of repetitions of single atomic patterns are not counted separately but considered as atomic patterns only.    
% \end{note}

%================================================================
\par
We know that molecular patterns are a combination of only two atomic patterns with successive cardinality. We now show that these possible combinations are restricted. In order to do this, we generalize the map given by equation \eqref{basic}  by replacing the symbols  \cL and \cR with the atomic blocks $\mathcal{L^{\mathrm{n}}R}$ and $\mathcal{L^{\mathrm{n-1}}R}$ -- a trick that we have already used in the proof of Lemma~\ref{repetition}. 
\par
Assume $\mu\in\big(\frac{a^{n-1}}{a^{n-1}b+S_{n-1}^a},\,\frac{a^{n-1}}{S_{n-1}^a}\big]$. Therefore, by the arguments stated above, the possible patterns are combinations of $\mathcal{L^{\mathrm{n}}R}$ and $\mathcal{L^{\mathrm{n-1}}R}$. Let us denote $\mathcal{L^{\mathrm{n}}R}$ by $\mathcal{L}'$ and $\mathcal{L^{\mathrm{n-1}}R}$ by $\mathcal{R}'$. From equation \eqref{x01}, when $x_0\leq -\frac{\mu S_{n-2}^a}{a^{n-1}}$ then $n$ consecutive \cLs appears before a \cR appears. In other words, at least one $\mathcal{L}'$ appears in the pattern. When $x_0>-\frac{\mu S_{n-2}^a}{a^{n-1}}$, $n$ consecutive \cLs cannot appear. In this case, at most $n-1$ \cLs could appear. Since the value of $\mu$ is restricted, one can in fact say that exactly $n-1$ \cLs would appear thereby guaranteeing at least one $\mathcal{R'}$ in the pattern. Consider $x_{new}=\frac{-\mu S_{n-2}^a}{a^{n-1}}$ which is a border that decides between $\mathcal{L}'$ and $\mathcal{R}'$. We define a new map as:
\begin{equation*}
\tilde{x}_{N+1}= \left\{
                \begin{array}{ccc}
                \tilde{a}\tilde{x}_{N}+\tilde{\mu} & for & \tilde{x}_{N} \leq x_{new}\\
                \tilde{b}\tilde{x}_{N}+\tilde{\mu} + \tilde{l} & for & \tilde{x}_{N} > x_{new}
                \end{array}
        \right.
\end{equation*}
Where $\tilde{a}=ba^n,\, \tilde{b}=ba^{n-1},\, \tilde{\mu}=\mu bS_{n-1}^a+\mu-1$ and $\tilde{l}=-\mu ba^{n-1}$.  By co-ordinate transformation $y_N=\tilde{x}_{N}-x_{new}$, we shift the border to zero. Hence the map equation becomes:
\begin{equation*}
y_{N+1}=\left\{
                \begin{array}{ccc}
                \tilde{a}y_{N}+\tilde{\mu} - x_{new}(1-\tilde{a}) & for & y_{N} \leq 0\\
                \tilde{b}y_{N}+\tilde{\mu} - x_{new}(1-\tilde{b}) + \tilde{l} & for & y_{N} > 0
                \end{array}
        \right.
\end{equation*}
i.e.
\begin{equation}\label{basic2}
y_{N+1}=\left\{
                \begin{array}{ccc}
                \tilde{a}y_{N}+\bar{\mu} & for & y_{N} \leq 0\\
                \tilde{b}y_{N}+\bar{\mu} + \bar{l} & for & y_{N} > 0
                \end{array}
        \right.
\end{equation}
Here, $\bar{\mu}=\tilde{\mu}-x_{new}(1-\tilde{a})$ and $\bar{l}=\tilde{l}+x_{new}(\tilde{b}-\tilde{a})$. In order to obtain orbits, this map should satisfy the condition $-\bar{l}>\bar{\mu}>0$ (see case 5 in the introduction of this paper). We show that this is indeed true. Consider, $-\bar{l}>\bar{\mu}>0$. Substituting for $\bar{l}$ and $\bar{\mu}$ we get,
$-\tilde{l}-x_{new}(\tilde{b}-\tilde{a})>\tilde{\mu}-x_{new}(1-\tilde{a})>0$. Substituting $\tilde{a},\tilde{b},\tilde{\mu},\tilde{l}$ and $x_{new}$ and simplifying we get, $\frac{a^{n-1}}{S_{n-1}^a}>\mu>\frac{a^{n-1}}{a^{n-1}b+S_{n-1}^a}$. This satisfies our earlier assumption about the range of $\mu$.
\par
Now using the Lemma \ref{thm:funda} one can show that consecutive $\mathcal{L'}$s and consecutive $\mathcal{R'}$s cannot appear simultaneously in any pattern. Similarly, using AMAL lemma, we get conditions on $\bar{\mu}$ for appearance of at most/at least $n$ consecutive $\mathcal{L'}$ in the pattern. Thus, one gets atomic and molecular patterns involving $\mathcal{L'}$ and $\mathcal{R'}$. Further, using Lemma \ref{only2}, we can conclude that every molecular pattern involving $\mathcal{L'}$ and $\mathcal{R'}$ is made up by combining only two atomic patterns of successive cardinality. One can then again define a new map to investigate the molecular region of patterns involving two atomic patterns of $\mathcal{L'}$ and $\mathcal{R'}$. Continuing in this way, one would finally arrive at an atomic pattern in terms of the new symbols defined. This fractal-like process makes the present study even more interesting.

\begin{example}
 Consider a pattern $\mathcal{LLRLLRLR\ LLRLLRLR\ LLRLR}$. This pattern corresponds to a  period-$21$ molecular orbit. Let $\mathcal{LLR}$ be denoted by $\mathcal{L}'$ and $\mathcal{LR}$ be denoted by $\mathcal{R}'$. Then the above pattern becomes $\mathcal{L'L'R'L'L'R'L'R'}$. Further now let $\mathcal{L'L'R'} \equiv \mathcal{LLRLLRLR}$  be denoted by $\mathcal{L}''$ and $\mathcal{L'R'} \equiv \mathcal{LLRLR}$ by $\mathcal{R}''$. Hence the above pattern can be written as $\mathcal{L''L''R''}$ which is atomic in the symbols $\mathcal{L''}$ and $\mathcal{R''}$. Therefore this is an admissible pattern. 
\par
Now consider another pattern say $\mathcal{LLRLLRLR\ LLRLR\ LLRLR\ LLRLLRLR}$. It can be represented as $\mathcal{L''R''R''L''}$. This pattern does not correspond to any admissible orbit as it is not atomic or molecular in the new symbols.
 \end{example}

%===================================
Note that the results above can be put together to obtain an algorithm for generating admissible patterns. Now that we have characterized for all admissible patterns, we turn to the question of finding how many different patterns exist for any given period $n$. 
\begin{thm}
For any $n$, there exist $\phi(n)$ distinct admissible patterns of cardinality $n$.
\end{thm}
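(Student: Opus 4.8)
The plan is to set up a bijection between the admissible patterns of cardinality $n$ and the reduced fractions with denominator $n$, whose number is $\phi(n)$ by the definition of Euler's totient. First I would fix a level of the fractal construction and work in the two-letter alphabet $\{\mathcal{L}',\mathcal{R}'\}$ of two atomic blocks of successive cardinality, exactly as in the discussion following Lemma~\ref{only2}: by that lemma together with the self-similar map \eqref{basic2}, which again has the form \eqref{basic}, every admissible pattern of cardinality $n$ is a word of length $n$ over $\{\mathcal{L}',\mathcal{R}'\}$, and, conversely, admissibility of such a word (nonemptiness of its \ps) is governed by the same Atomic, Fundamental (Lemma~\ref{thm:funda}) and AMAL machinery applied to the primed symbols. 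Since an orbit is independent of its starting point, these words are read up to cyclic rotation. Thus the theorem reduces to the purely combinatorial count: the number of primitive balanced binary necklaces of length $n$ equals $\phi(n)$, where ``balanced'' encodes that consecutive $\mathcal{L}'$s and consecutive $\mathcal{R}'$s never occur simultaneously and that the gap structure is the one forced by AMAL.

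To each admissible pattern of cardinality $n$ I would attach its rotation number $\rho=j/n$, where $j$ is the number of $\mathcal{R}'$ blocks it contains. The crux is to show that for each admissible value of $j$ there is exactly one pattern (as a cyclic class) with that many $\mathcal{R}'$s. Existence follows by locating $\rho$ in the nested molecular regions of Figure~\ref{examp2} and checking that the corresponding \ps is nonempty, just as in the Atomic Lemma. Uniqueness is where the at-most/at-least conditions do the work: Lemma~\ref{thm:funda} applied to $\{\mathcal{L}',\mathcal{R}'\}$ forbids mixing consecutive $\mathcal{L}'$s with consecutive $\mathcal{R}'$s, while the AMAL inequalities force each gap between successive $\mathcal{R}'$s to take one of the two consecutive values $\lfloor n/j\rfloor$ or $\lceil n/j\rceil$ and to be distributed as evenly as possible; this maximally even arrangement is unique up to rotation. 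This would establish a bijection between admissible patterns of cardinality $n$ and the admissible densities $j$.

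It then remains to identify which $j$ are admissible at cardinality exactly $n$ and to count them. A length-$n$ word of density $j/n$ has minimal period $n/\gcd(j,n)$; by Definition~\ref{periodic} (the period is the smallest repeat length) together with Lemma~\ref{repetition}, such a word is a genuine cardinality-$n$ pattern, rather than a repetition of a shorter one, precisely when $\gcd(j,n)=1$, which forces $1\le j\le n-1$. Hence the admissible patterns of cardinality $n$ are in bijection with $\{\,j:\ 1\le j\le n-1,\ \gcd(j,n)=1\,\}$, a set of size $\phi(n)$ for $n\ge 2$ (the densities $j$ and $n-j$ being related by the $x\mapsto -x$ symmetry of the Note), while $n=1$ is the degenerate single atomic block. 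I expect the main obstacle to be the uniqueness step of the second paragraph: turning the AMAL gap inequalities into a rigorous proof that the balanced necklace of each prescribed density is unique up to rotation, and simultaneously verifying that every coprime density is realized by a nonempty parameter interval.
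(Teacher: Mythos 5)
Your route---reduce to words over $\{\mathcal{L}',\mathcal{R}'\}$ via Lemma~\ref{only2} and the renormalized map \eqref{basic2}, attach to each pattern its density $j/n$, argue one pattern per admissible density, and keep exactly the $j$ coprime to $n$---is in substance the paper's proof. But the step you defer as the ``main obstacle'' (uniqueness of the maximally even necklace of each density, and realization of every coprime density by a nonempty parameter interval) is precisely the content of the paper's argument, and the paper closes it without any appeal to balanced-word theory: writing $n-k=qk+p$ with $p\neq 0$, Lemma~\ref{only2} forces the block multiset to be exactly $p$ copies of $\mathcal{L}^{\mathrm{q+1}}\mathcal{R}$ and $k-p$ copies of $\mathcal{L}^{\mathrm{q}}\mathcal{R}$; renaming blocks yields a strictly smaller instance of the same problem with parameters $(k,\min\{p,k-p\})$, and iterating this is the Euclidean algorithm on $(n,k)$. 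The iteration terminates at an atomic pattern---unique up to rotation, and admissible because each renormalized map again satisfies $-\bar{l}>\bar{\mu}>0$ so the Atomic Lemma applies at every level---if and only if $\gcd(n,k)=1$; otherwise it eventually lands in the divisibility case $k \mid (n-k)$, where equal blocks give a repetition (excluded by Lemma~\ref{repetition}) and unequal blocks violate Lemma~\ref{only2}. So both the uniqueness and the existence you flag come for free from the recursion you already set up: your plan stops one observation short of a complete proof, namely that the renormalization itself \emph{is} the even-distribution argument.

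One step as written is actually wrong and would need repair: the claim that ``a length-$n$ word of density $j/n$ has minimal period $n/\gcd(j,n)$'' is false for arbitrary words (the word $\mathcal{LLRR}$ has density $1/2$ but minimal period $4$); it holds only for the balanced word of that density, so your coprimality reduction is circular until the uniqueness step of your second paragraph is fully established. The paper sidesteps this by disposing of the non-coprime case directly inside the recursion, as above, rather than through a period count; you should do the same, or first prove balancedness of all admissible patterns and only then invoke the period formula. A minor slip in the same vein: at the top level the rotation number should count the $\mathcal{R}$s of the original pattern, not the $\mathcal{R}'$ blocks, since the primed alphabet only exists after one renormalization step.
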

\begin{IEEEproof}
Let $\sigma$ be a pattern with $|\sigma| = n$. Further assume that there are $k$ \cRs in $\sigma$. Hence the number of \cLs in $\sigma$ are $n-k$. Assume without loss of generality, that $k \leq n-k$. If $k=1$, then the pattern is $\mathcal{L^{\mathrm{n-1}}R}$. If $k\neq 1$ then suppose $k \divides (n-k)$ i.e. $n-k=qk$. One possibility is all the $k$ atomic blocks are of type $\mathcal{L^{\mathrm{q}}R}$. This pattern is repetition of $\mathcal{L^{\mathrm{q}}R}$ which is not admissible by Lemma \ref{repetition}.
\par
Another possibility is that at least one atomic block has more than $q$ \cLs. This would force some other atomic block to have less than $q$ \cLs. By Lemma \ref{only2}, each molecular orbit has only two types of atomic blocks with successive cardinality and therefore such cases are not possible.
\par
Now suppose, $k\notdivides (n-k)$ i.e. $n-k=qk+p$. Using Lemma \ref{only2} we conclude that there are $p$ atomic blocks of type $\mathcal{L^{\mathrm{q+1}}R}$ and $k-p$ atomic blocks of type $\mathcal{L^{\mathrm{q}}R}$ as $n-k=q(k-p)+(q+1)p$. Denoting $\mathcal{L^{\mathrm{q}}R}$ by $\mathcal{L}'$ and $\mathcal{L^{\mathrm{q+1}}R}$ by $\mathcal{R}'$, we are now back to the original problem, with $|\sigma'|=k$, with $p$ $\mathcal{R}'$s and $k-p$ $\mathcal{L}'$s in $\sigma'$. Now we set $k$ as new $n$ and $min\{p, k-p\}$ as the new $k$.
\par
 This process is repeated until $p=1$ or $p=k-1$. This is only possible if the original $n$ and $k$ were co-prime. Thus the number of \cLs and \cRs that appear in a period $n$ orbit have to be co-prime to $n$. Since there are $\phi(n)$ numbers co-prime to $n$, there are $\phi(n)$ distinct admissible patterns.
\end{IEEEproof}

The proof above in fact gives us an algorithm of generating admissible patterns of any given period $n$. We demonstrate this with an example:

\begin{example}
Suppose we need to generate all admissible patterns for $n = 18$. From the theorem above, we know there are $\phi(18) = 6$ distinct admissible patterns. Let us find these admissible patterns. The numbers co-prime to $18$ are $1,5,7,11,13,17$ respectively. Thus the $6$ distinct patterns would have $1,5,7,11,13$ and $17$ \cRs in them. The patterns corresponding to $1$ and $17$ are the \cL-atomic and \cR-atomic patterns respectively. 

Consider the case of $5$ \cRs. Then the pattern must contain $13$ \cLs. As $13 = 2 \times 5 + 3$, we can conclude that there must be $3$ copies of $\mathcal{LLLR}$ and two copies of $\mathcal{LLR}$ in the pattern. Now following the proof, we look at patterns of length $5$ having two $\mathcal{R'} = \mathcal{LLR}$ and three $\mathcal{L'} = \mathcal{LLLR}$. Again, since $3 = 1 \times 2 + 1$, we conclude that there must be one pattern of $\mathcal{L'L'R'} = \mathcal{LLLR LLLR LLR}$ and one pattern of $\mathcal{L'R'} = \mathcal{LLLR LLR}$. Thus the pattern corresponding to $5$ is $\mathcal{LLLR LLLR LLR LLLR LLR}$. The case of $13$ \cRs is obtained from the this pattern by interchanging \cL and \cR.

Finally, consider the case of $7$ \cRs and therefore $11$ \cLs. As $11 = 1 \times 7 + 4$, there should be $4$ copies of $\mathcal{LLR}$ and three copies of $\mathcal{LR}$. Now looking at patterns of length $7$ with $4$ $\mathcal{L'} = \mathcal{LLR}$ and three $\mathcal{R'} = \mathcal{LR}$, we have $4 = 1 \times 3 + 1$ and so there should be one copy of $\mathcal{L'L'R'} = \mathcal{LLR LLR LR}$ and two copies of $\mathcal{L'R'}= \mathcal{LLR LR}$. Thus, the final pattern is $\mathcal{LLR LR LLR LR LLR LLR LR}$.
\end{example}

Given a pattern $\sigma$ with $|\sigma| = n$, let us assume that the first symbol in the pattern stands for the point $x_0$. Then one can evaluate $x_n$ and by setting $x_n = x_0$, one obtains an expression for $x_0$ in terms of the parameters $a,b,\mu, l$.
%For a particular pattern $\sigma$, one can write down all the inequalities as demonstrated in the Atomic Lemma (Lemma \ref{atomic}). 
The value of $x_0$ can then be substituted into the inequalities corresponding to each position (as demonstrated in the Atomic Lemma) to obtain $\mu_1$ and $\mu_2$ such that $\mathcal{P_{\sigma}} = (\mu_1,\mu_2]$. If the period $n$ is very large this method of substitution becomes very cumbersome. Amongst all these inequalities, if one knows the precise location of the inequalities that gives one $\mu_1$ and $\mu_2$, then it saves a lot of work. We now state a lemma that helps us find the precise location of  that \cL and \cR in the pattern where, if one substitutes $x_0$, one gets $\mu_{2}$ and $\mu_{1}$. Observe that every \cL gives an upper limit for $\mu$ and every \cR gives a lower limit for $\mu$. 

%\begin{definition} 
Given a pattern $\sigma$ with $|\sigma| = n$, we define the binary sequence $\mathfrak{F}(\sigma)$ by substituting $0$ for $\mathcal{L}$ and $1$ for $\mathcal{R}$. Observe that all cyclic shifts of the binary sequence $\mathfrak{F}(\sigma)$ represent the same admissible pattern. Among all the cyclically shifted binary sequences of $\mathfrak{F}(\sigma)$, the sequence that corresponds to the largest binary number is called \cL-way arranged pattern. Similarly, the cyclically shifted binary sequence that corresponds to the smallest binary number is called \cR-way arranged pattern. Observe that a \cL-way arranged pattern always begins with a \cR and ends with a \cL, whereas a \cR-way pattern always begins with a \cL and ends with a \cR. 
%\begin{equation}
%c_i\mathfrak{F(\mathrm{x^i})}=\left \{
%\begin{array}{ccc}
%0 &if &x^i\in\mathcal{L}\\
%1 &if &x^i\in\mathcal{R}
%\end{array}
%\right.
%\end{equation}
%Where $x^i$ is the period $n$ fixed point.
%\end{definition}
%This map $\mathfrak{F}$ transforms a pattern $\sigma$ into a binary sequence.
%\begin{definition}
% Weight of the pattern $\sigma$ is defined as $Wt(\sigma)=\sum_{i=1}^{n} 2^ic_i$.
%\end{definition}

%\begin{definition}
% A pattern is called \cL-way arranged pattern if it has the maximum weight among all the patterns of same length. 
%\end{definition}
%\begin{definition}
% A pattern is called \cR-way arranged pattern if it has the minimum weight among all the patterns of same length. 
%\end{definition}

%Given a pattern $\sigma$ with $|\sigma| = n$, let us assume that the first symbol in the pattern stands for the point $x_0$. Then we can evaluate $x_n$ and by setting $x_n = x_0$, one obtains an expression for $x_0$ in terms of the parameters $a,b,\mu, \ell$.

\begin{Lemma}\label{arranged}
 The \cR-way arranged pattern gives the location for determining $\mu_{1}$ and the \cL-way arranged pattern gives the location for determining $\mu_{2}$.
\end{Lemma}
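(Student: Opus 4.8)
The plan is to reduce the lemma to locating two specific orbit points and then to identify those points through an order-preserving dictionary between the real orbit and its symbolic code. I would first record how each constraint depends on $\mu$. Writing $x_0$ in closed form from $x_n=x_0$ (exactly as in the Atomic Lemma) and noting that every $x_i$ is obtained from $x_0$ by composing increasing affine branches, one sees that each $x_i$ is an \emph{increasing} affine function of $\mu$. Consequently a symbol \cL (the constraint $x_i\le 0$) contributes an upper bound $\mu^{upper}_i$ and a symbol \cR (the constraint $x_i>0$) contributes a lower bound $\mu^{lower}_i$; moreover, since raising $\mu$ lifts every point, among the \cLs the binding (smallest) upper bound $\mu_2$ is produced by the point nearest the border from below, i.e. the \emph{largest non-positive} orbit point, while among the \cRs the binding (largest) lower bound $\mu_1$ is produced by the point nearest the border from above, i.e. the \emph{smallest positive} orbit point. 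Thus it suffices to show that the smallest positive point occupies the final position of the \cR-way arrangement and the largest non-positive point occupies the final position of the \cL-way arrangement.

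Next I would establish the structural fact that the real order of the orbit points coincides with the lexicographic (equivalently, binary-number) order of their itineraries. Given two distinct orbit points with $x<y$, let $j$ be the first index at which their symbol sequences differ. Because both branches of $f$ have positive slope, applying the same branch preserves order, so $f^i(x)<f^i(y)$ for all $i\le j$; at step $j$ the two points sit on opposite sides of the border, whence $f^j(x)\le 0<f^j(y)$, so $x$ carries \cL (coded $0$) and $y$ carries \cR (coded $1$) at position $j$, and the itinerary of $x$ is lexicographically smaller. Distinctness of the $n$ itineraries is forced by primitivity of the pattern (a genuine period-$n$ orbit is not a repetition of a shorter block, by Lemma \ref{repetition}), so all cyclic shifts are pairwise distinct and the comparison is already decided within one period. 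Hence the dictionary is an order isomorphism, and in particular the smallest orbit point is the first symbol of the \cR-way arrangement (smallest binary number) and the largest orbit point is the first symbol of the \cL-way arrangement (largest binary number).

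Finally I would pin down the signs. Take the \cR-way arrangement and let its first symbol be $x_0$, the global minimum of the orbit; since this arrangement ends in \cR, its last symbol $x_{n-1}$ is an \cR point with $f(x_{n-1})=x_0$. As $f$ restricted to \cR is increasing, the \cR point with the smallest image is the smallest \cR point; but $x_0$ is the global minimum and is itself the image of the \cR point $x_{n-1}$, so $x_{n-1}$ must be the smallest positive point, which by the first paragraph yields $\mu_1$. The symmetric argument on the \cL-way arrangement (which begins with \cR and ends with \cL) uses that $f$ restricted to \cL is increasing: its last symbol is the \cL point whose image is the global maximum $x_0$, hence the largest non-positive point, which yields $\mu_2$.

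The main obstacle I expect is the second step — making the order-preserving dictionary airtight across the discontinuity. The map here is \emph{contracting} on each branch ($a,b\in(0,1)$), so one cannot invoke expansivity to separate points; distinctness of itineraries must instead be drawn from primitivity of the pattern, and the comparison at the first differing index $j$ must be argued carefully to guarantee that the points have by then landed on opposite sides of the border. Once this correspondence is secured, the identification of the two extremal-sign points, and hence of the two boundary values $\mu_1$ and $\mu_2$, follows immediately.
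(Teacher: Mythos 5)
Your proposal is correct, but it takes a genuinely different route from the paper's own proof. The paper argues locally and recursively: for an atomic pattern it reads off from the telescoping inequalities of the Atomic Lemma that each subsequent $\mathcal{L}$ in a chain tightens the upper bound, so the last $\mathcal{L}$ of the $\mathcal{L}$-way arrangement yields $\mu_2$ while the lone $\mathcal{R}$ yields $\mu_1$; for molecular patterns it invokes the fractal renaming device --- rewrite the pattern in block symbols $\mathcal{L}'$, $\mathcal{R}'$ until it becomes atomic, apply the atomic case at the top level, and expand the symbols back --- with the expansion step left at the level of a sketch (``one can then argue''). You instead prove a global statement valid for every admissible pattern at once: each $x_i$ is an increasing affine function of $\mu$; the real order of the orbit points coincides with the lexicographic order of their itineraries (the kneading-style argument, sound here because both branches are increasing); and the binding constraints are the extremal-sign points, which your image-of-the-extremum trick pins to the final positions of the two canonical arrangements. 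This buys a uniform, non-recursive argument and makes precise exactly the step the paper waves at; the paper's route, in exchange, reuses its renormalization machinery and stays closer to the explicit inequalities. One refinement to the obstacle you flag: distinctness of itineraries does not need primitivity imported from Lemma~\ref{repetition} (which, as stated, covers only repetitions of a single atomic block) --- contraction, which you feared was an enemy, is in fact the friend. If two distinct points of a period-$n$ orbit shared the same forward itinerary, the same contracting affine branches would be applied to both, so their distance would tend to zero, whereas periodicity returns that distance to its initial value every $n$ steps; hence all itineraries are distinct, the pattern is automatically primitive, and any two cyclic shifts differ at some index within one period. With that patch, and noting that your monotone comparison ($x_j \leq x_{i^*}$ for all admissible $\mu$ forces $\mu_j^{upper} \geq \mu_{i^*}^{upper}$, and dually for lower bounds) even handles possible ties at the endpoints, your argument is airtight --- arguably more complete than the proof in the paper.
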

\begin{proof}
 Every inequality $x_{i}\leq 0$ gives an upper bound for $\mu$ whereas every inequality $x_{i}> 0$ gives an lower bound for $\mu$. First let us consider a \cL-atomic pattern. From the Atomic Lemma one knows that for a chain of consecutive \cLs, the upper bound becomes tighter with each subsequent \cL. As a result, the value of the upper bound becomes smaller. For an atomic pattern $\mathcal{L^{\mathrm n}R}$, if we rearrange the symbols with all the \cLs \- following the \-\- \cR, then the last \cL gives the minimum upper bound, i.e., the value of $\mu_2$. This is indeed the \cL-way arrangement for the atomic pattern. Meanwhile, the lower limit for $\mu$ is obtained from the lone \cR in the pattern and the \cR-way arrangement would have this \cR as the last symbol. A similar argument applies for a \cR-atomic pattern.  

Let us now consider a molecular pattern made up of \cL-atomic patterns. Following the fractal like argument that we have used before, this molecular pattern can be recursively rewritten as a pattern of new symbols, until one obtains an atomic pattern in those new symbols. If the new symbols are $\mathcal{L'}$ and $\mathcal{R'}$, then using the first part of this proof, we know that the upper and lower bounds can be found from specific $\mathcal{L'}$-way arrangement and $\mathcal{R'}$-way arrangement. On expanding, these symbols into the original string of \cL and \cR, one can then argue that it is indeed the \cL-way arranged pattern and the \cR-way arranged pattern that defines the positions of the \cL and \cR that gives the tightest upper and lower bounds for $\mu$.
%Now we want to arrange the pattern in such a way that the last symbol gives the position of $\mu_2$. If the pattern is atomic, from Atomic Lemma we know that \cR will come first followed by the chain of \cLs i.e. \cL-way arranged pattern will give the position of $\mu_2$.. If the pattern is molecular, we have already shown that it can be expressed as atomic pattern i.e. chain of $\mathcal{L'}$s followed by single $\mathcal{R'}$. Where $\mathcal{L'}$ and $\mathcal{R'}$ themselves will be molecular. Hence, a \cL-way arranged pattern will give the position $\mu_{2}$. Similar arguments are valid for a \cR-way arranged pattern.
\end{proof}

\begin{example}
Let us consider an example to demonstrate the above lemma. Consider a pattern of the form \newline $\mathcal{RLRLLRLLRLRLLRLRLLRLL}$ with 21 symbols. If we start with $x_0$ in \cR and write out the equations for each $x_i$, then we obtain an expression for $x_0$ in terms of $a,b,\mu,l$ by equating $x_0$ to $x_{21}$. Assuming that we know $a,b,l$, each of the inequalities corresponding to $x_i$s give us an upper or lower bound for $\mu$. 

\begin{subfigures}
\begin{figure}[ht!]
\begin{center}
\input{explain.pstricks}
\caption{\cL-way arranged pattern}
\label{examp}
\end{center}
\end{figure}
\begin{figure}[ht!]
\begin{center}
\input{explain1.pstricks}
\caption{\cR-way arranged pattern}
\label{examp1}
\end{center}
\end{figure}
\end{subfigures}

In Figure \ref{examp}, the pattern is arranged in \cL-way, whereas in Figure \ref{examp1}, the pattern is arranged in \cR-way. From these patterns, one can conclude that the upper bound $\mu_2$ for such a pattern is obtained by considering the 5-th \cL in the pattern $\mathcal{RLRLLRLLRLRLLRLRLLRLL}$, that is, by considering the inequality arising from $x_7$ at position $7$. Meanwhile the lower bound $\mu_1$ for such a pattern is obtained by considering the 7-th \cR that appears in the pattern, that is, the inequality arising from $x_{15}$ at position $15$.

For example, if one assumes $a = 0.85, b = 0.8$ and $l = -1$, then one obtains lower bounds at every position having \cR, that is, at positions $0,2,5,8,10,13,15,18$. These values turn out to be $0.3085, 0.3553, 0.3235, 0.3054, 0.3532, \- 0.3223, 0.3650, 0.3290$ respectively. Thus, the value for $\mu_1 = 0.3650$ which comes from the inequality at position $15$ of the original pattern. The cyclic shift that brings the \cR at position $15$ to the last position is indeed the \cR-way arrangement. Similarly, one obtains upper bounds at positions $1,3,4,6,7,9,11,12,14,16,17,19,20$ -- the values obtained are $0.4270, 0.4671, 0.3880, 0.4399, 0.3658, 0.4243, 0.4652, 0.3865, 0.4388, 0.4753, 0.3946, 0.4445, 0.3696$ respectively. Thus, the value for $\mu_2 = 0.3658$ which comes from the \cL at position $7$. The cyclic shift that brings the \cL at position $7$ to the last position is the \cL-way arrangement.

Thus, the above pattern appears for $\mu$ in the range $(0.3650, 0,3658]$. It was also observed that the inequalities one obtains are exactly the same, no matter which cyclic shift one considers as the original pattern.
\end{example}

%=============================================
\section{Conclusions}
In this paper, we have examined stable periodic orbits of piecewise-smooth systems analytically. Using a model given in literature, we first concluded that stable periodic orbits would appear only for certain values of parameters. We considered the case where the parameters $a,b\in (0,\,1)$ and $l = -1$. With these parameters, it turns out that stable periodic orbits exist for $\mu \in (0,1]$. We have shown several interesting results about these periodic orbits. It was shown that stable orbits of any periodicity exists in such a system. The exact patterns for these periodic orbits were determined. It was shown that all periodic orbits can be thought of as a combination of at most two distinct atomic patterns of successive cardinality. Further, it was shown that given any $n$, there are precisely $\phi(n)$ distinct types (patterns) of periodic orbits with cardinality $n$. We have also given an algorithm of determining the range of $\mu$ where the periodic orbits display a particular pattern.  

%The results we got, completely characterize the properties of the fixed periodic orbits existing in this range.  These results are concluded as:
%\begin{itemize}
%\item Fixed periodic orbits exist in this range.
%\item These orbits are placed non-monotonically.
%\item For an orbit to be an admissible, $\mu_{2}>\mu_{1}$.
%\item Pattern of every molecular orbit is made up of combinations of patterns of only two atomic orbits having successive periods.
%\item For a given period $n$,  $\phi(n)$ distinct admissible patterns exist. 
%\item As the period of an orbit \mo increases, its corresponding \ps decreases.
%\item For every orbit, there is a one-to-one correspondence with the range of $\mu$.
%\end{itemize}
%====================================

\bibliographystyle{IEEEtran}
% argument is your BibTeX string definitions and bibliography database(s)

\bibliography{reference}

%

% that's all folks
\end{document}